\newtheorem{theorem}{Theorem}[section]
\newtheorem{corollary}[theorem]{Corollary}
\newtheorem{lemma}[theorem]{Lemma}
\newtheorem{remark}[theorem]{Remark}
\newtheorem{example}[theorem]{Example}
\newtheorem{definition}[theorem]{Definition}
\newcommand{\st}{\stackrel{*}\le} 
\newcommand{\lstar}{{}_{\ast}{\le}} 
\newcommand{\rstar}{{\le}_{*}} 
\newcommand{\minus}{\stackrel{-}\le} 
\newcommand{\spa}{\stackrel{s}\preceq} 
\newcommand{\diam}{\stackrel{\diamond}\le} 
\newcommand{\C}{{\ensuremath{\mathbb{C}}}}
\newcommand{\Cm}{{\ensuremath{\C^{m\times n}}}}
\newcommand{\Cn}{{\ensuremath{\C^{n\times m}}}}
\newcommand{\Cnn}{{\ensuremath{\C^{n\times n}}}}
\newcommand{\Crr}{{\ensuremath{\C^{r\times r}}}}
\newcommand{\Ra}{{\ensuremath{\cal R}}}
\newcommand{\Nu}{{\ensuremath{\cal N}}}
\newcommand{\rk}{{\ensuremath{\rm rk}}}
\newcommand{\ra}{{\ensuremath{\text{\rm rk}}}}
\begin{document}
\author{D.E. Ferreyra\thanks{Universidad Nacional de R\'io Cuarto, CONICET, FCEFQyN, RN 36 Km 601, R\'io Cuarto, 5800, C\'ordoba. Argentina. E-mail:\texttt{deferreyra@exa.unrc.edu.ar, flevis@exa.unrc.edu.ar, vorquera@exa.unrc.edu.ar}.}\;, F.E. Levis$^{*}$, G. Maharana\thanks{ Department of
Mathematics, Siksha ‘O’ Anusandhan
(Deemed to be University), Khandagiri
Square, Bhubaneswar 751030, Odisha,
India. E-mail: \texttt{gayatrimaharana@soa.ac.in}}\;,
V. Orquera$^{*}$
}


\title{New characterizations of the diamond partial order}
\date{}
\maketitle

\begin{abstract}
Baksalary and Hauke introduced the diamond partial order in 1990, which we revisit in this paper. This order was defined on the set of rectangular matrices and is the same as the star and minus partial orders for partial isometries. New ways of describing and studying the diamond partial order are being looked into in this paper. Particularly, we present a new characterization by using an additivity property of the column spaces. Additionally, we also study the relationship between the left (resp., right) star and diamond partial orders. Specifically, we obtain conditions in which the diamond partial order means the left (resp., right) star partial order. The reverse order law for the Moore-Penrose inverse is characterized when $A$ is below $B$ under the diamond partial order. Finally, an interesting way of describing bi-dagger matrices is found.
\end{abstract}

\noindent{\bf AMS Classification:} 15A09, 06A06, 15B57

\noindent{\bf  Keywords:} Generalized inverses, diamond partial order, minus partial order, left star partial order, right star partial order.

\maketitle

\section{Introduction}
The area of matrix generalized inverses has grown steadily over the last few decades, which plays a crucial role in the study of partial orders \cite{MiBhMa}. The minus partial order was introduced by Hartwing \cite{Ha} in 1980, which is a generalization of the star partial order defined by Drazin \cite{Dr2} two years earlier. Later, Mitra \cite{Mi} defined the sharp partial order as a binary relation on the set of matrices of index at most one, using the group inverse.

In 1990, Baksalary and Hauke \cite{BaHa} introduced a new partial order on the set of rectangular matrices. This order coincides with the star and minus partial orders on the set of partial isometries and is now known as the diamond partial order \cite{LePaTh}. The partial diamond order hasn't been studied as much as the star, minus, and sharp orders \cite{BaBaLi1, BaBaLi2, BaHaLiLi, BaMi, HaSt}.

This article represents a modest contribution in this direction as we explore new properties and characterizations of the diamond partial order. The following is the outline of the paper: We present the basic notations, definitions, and some known results of generalized inverses and partial orders in Section 2. In Section 3, we establish new characterizations of the diamond partial order. In particular, we characterize the diamond partial order by means of an additive property of the column spaces. In Section 4, we analyze the relationships between the left (resp., right) star and diamond partial orders. We conclude the work by investigating the reverse order law for the Moore-Penrose inverse when $A$ is below $B$ under the diamond partial order, as well as establishing an interesting characterization of bi-dagger matrices.

\section{Notation and Preliminaries}

For convenience, throughout this paper, $ \mathbb{C}^{m\times n}$ stands for the set of $m \times n$ matrices over complex numbers. In addition, for $A\in\Cm$, the symbols $A^*$,$A^{-1}$, $\rk(A)$, $\Nu(A)$, and $\Ra(A)$ will stand for the conjugate transpose, the inverse (when $m=n$), the rank, the null space, and the range space of $A$, respectively. Moreover, $I_n$ will refer to the $n \times n$ identity matrix. The index of $A\in \mathbb{C}^{n\times n}$ is the smallest non-negative integer $k$, such that $\ra(A^k)=\ra(A^{k+1})$, which is denoted by $ind(A)$. A {\it inner inverse} of $A\in \Cm$ is defined as a matrix $X \in \Cn$ satisfying the equation $AXA = A$, and the set of all inner inverses of $A$ is denoted by $A\{1\}$. 

The {\it Moore-Penrose inverse} of $A \in \Cm$ is the unique matrix $X\in \Cn$ that satisfies the Penrose equations $AXA=A,~XAX=X,~(AX)^*=AX$, and $(XA)^*=XA$. It is denoted by $A^{\dagger}$.

The matrices $P_A:=AA^{\dag}$ and $Q_A:=A^{\dag}A $ denote the orthogonal projectors onto $\Ra(A)$ and $\Ra(A^*)$, respectively.

Recall that a binary relation on a nonempty set that is reflexive and transitive is called a {\it pre-order}, while a {\it partial order} is a pre-order that also satisfies the antisymmetric property. 

The following binary relations are well known. For matrices $A,B\in \Cm$, we say: 
\begin{enumerate}
\item[$\bullet$]$A$ is below $B$ under the {\it star partial order}, i.e., $A\st B$, if $A^* A = A^* B$ and $AA^*=B A^*$ \cite{Dr2}.
\item[$\bullet$] $A$ is below $B$ under the {\it minus partial order}, i.e., $A\minus B$, if $A^- A = A^- B$ and $AA^-=B A^-$, for some $A^-\in A\{1\}$ \cite{Ha}.
\item[$\bullet$] $A$ is below $B$ under the {\it space pre-order},  i.e., $A\spa B$, if $\Ra(A)\subseteq \Ra(B)$ and $\Ra(A^*)\subseteq \Ra(B^*)$ \cite{MiBhMa}.
\end{enumerate}
Out of several known characterizations of ordering mentioned above, the following will be referred to in the sequel:
\begin{eqnarray}
A \st B &~\Leftrightarrow~& A^\dag A= A^\dag B ~~\text{and}~~ AA^\dag=BA^\dag. \label{star order} \\
A \minus B &~\Leftrightarrow~& \ra(B-A)=\ra(B)-\ra(A) ~\Leftrightarrow~ B\{1\}\subseteq A\{1\}. \label{minus order} \\
A \spa B &~\Leftrightarrow~& A=BB^{-}A=AB^{-}B, \text{ for each } B^{-} \in B\{1\}. \label{space order}
\end{eqnarray}

An interesting property of the relations defined in \eqref{star order}, \eqref{minus order}, and \eqref{space order} is given by the following chains of implications:

\begin{equation}\label{star minus space}
 A \st B  ~\Rightarrow~ A \minus B ~\Rightarrow~ A \spa B.
\end{equation}

In \cite{BaHa}, Baksalary and Hauke have defined the following order relation (currently called diamond partial order \cite{LePaTh}):
 
 \begin{definition}\label{def BaHa} Let  $A,B\in \Cm.$ Then, we say 
 $A\diam B$ if $A\spa B$ and $AB^*A=AA^*A$.
 \end{definition}
 
As $AB^*A=AA^*A$ is equivalent to $A^*(B-A)A^*=0$, from the definition of star partial order, it's clear 
\begin{equation}\label{star diam space}
A\st B \Rightarrow A\diam B \Rightarrow A\spa B.
\end{equation}
It is well known that the chains \eqref{star minus space} and \eqref{star diam space} cannot be dovetailed. Moreover, $A\diam B \not\Rightarrow A\minus B$ and $A\minus B \not\Rightarrow A\diam B$, as shown in the examples below,
\begin{example}\label{example 1}
Let 
\begin{equation*} 
A=\left[\begin{array}{cc}
0 & 1\\
0 &0
\end{array}\right] \quad \text{and} \quad B=\left[\begin{array}{cc}
1 & 1\\
0 &1
\end{array}\right].
\end{equation*} 
As $B$ is nonsingular, from \eqref{space order} it is clear that $\Ra(A)\subseteq \Ra(B)$ and $\Ra(A^*)\subseteq \Ra(B^*)$, and therefore $A\spa B$. Also, as $B-A=I_2$, we have $A^*(B-A)A^*=(A^*)^2=0$, whence $AB^*A=AA^*A$. So, $A\diam B$. However, $\ra(B-A)=\ra(I_2)=2>\ra(B)-\ra(A)$. Thus, from \eqref{minus order} it follows that $A\not \minus B$.
\end{example}
 \begin{example}\label{example 2}
Let \begin{equation*}
A=\left[\begin{array}{cc}
0 & 1\\
0 &0
\end{array}\right] \quad \text{and} \quad B=\left[\begin{array}{cc}
1 & 0\\
-1 &1
\end{array}\right].
\end{equation*}  
Clearly, $\ra(B-A)=\ra(B)-\ra(A)$ and so $A\minus B$. However, direct calculations show that $AB^*A\neq AA^*A$, whence $A\not \diam B$.	
    \end{example}

As a counterpart to the above examples, in \cite{BaHa} it was proved that there are interesting relationships between these orders: 
\begin{equation}\label{diamond characterization}
A\diam B \Leftrightarrow A^\dag \minus B^\dag. 
\end{equation} 
From \eqref{star order} and \eqref{diamond characterization}, it follows that the orders $\st$, $\diam$, and $\minus$ coincide on the set of partial isometries (that is, $A^\dag=A^*$).

\section{Some new characterizations of the diamond partial order}

In this section, we establish new characterizations of the diamond partial order. In particular, we characterize the diamond partial order by means of an additive property of the column spaces.

Besides that, we begin with an auxiliary lemma.

\begin{lemma}\label{lemma rango} Let $A,B\in \Cm$. Then, $\Ra(A+B)=\Ra(A)+\Ra(B)$ if and only if $\Ra(A)\subseteq \Ra(A+B)$. 
\end{lemma}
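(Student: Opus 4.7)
The plan is entirely elementary linear algebra; the key observation is that the inclusion $\Ra(A+B)\subseteq \Ra(A)+\Ra(B)$ is automatic, since every vector $(A+B)x = Ax+Bx$ lies in $\Ra(A)+\Ra(B)$. Hence the content of the lemma is concentrated in the reverse inclusion, and it is this asymmetry that makes the condition $\Ra(A)\subseteq \Ra(A+B)$ sufficient.

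For the forward implication I would just observe: if $\Ra(A+B)=\Ra(A)+\Ra(B)$, then trivially $\Ra(A)\subseteq \Ra(A)+\Ra(B)=\Ra(A+B)$. This direction requires no work at all.

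For the reverse implication, assume $\Ra(A)\subseteq \Ra(A+B)$. The one inclusion I still owe is $\Ra(A)+\Ra(B)\subseteq \Ra(A+B)$, and since $\Ra(A)$ is already inside $\Ra(A+B)$ by hypothesis, it suffices to show $\Ra(B)\subseteq \Ra(A+B)$. The trick is to write $B=(A+B)-A$: then for any $x$, $Bx=(A+B)x-Ax$, where the first summand is in $\Ra(A+B)$ by definition and the second lies in $\Ra(A)\subseteq \Ra(A+B)$ by assumption. Therefore $Bx\in\Ra(A+B)$, giving $\Ra(B)\subseteq \Ra(A+B)$. Combined with the hypothesis on $\Ra(A)$ this yields $\Ra(A)+\Ra(B)\subseteq \Ra(A+B)$, and together with the automatic reverse inclusion the desired equality follows.

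There is really no hard step here; the only point that might be called the main idea is the algebraic identity $B=(A+B)-A$, which lets one transfer containment between $\Ra(A)$ and $\Ra(B)$ modulo $\Ra(A+B)$. I would present the proof in two short displayed inclusions and not belabor it, since the lemma is meant only as a convenient auxiliary statement for the range-additivity characterization of $\diam$ that follows.
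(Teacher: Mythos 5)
Your argument is correct and complete: the forward direction is immediate, and the identity $Bx=(A+B)x-Ax$ together with the hypothesis $\Ra(A)\subseteq\Ra(A+B)$ gives $\Ra(B)\subseteq\Ra(A+B)$, hence the nontrivial inclusion $\Ra(A)+\Ra(B)\subseteq\Ra(A+B)$. The paper states this lemma without proof, and what you wrote is exactly the elementary verification it implicitly relies on.
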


\begin{theorem} \label{characterization 1} Let $A,B\in\Cm$. Then the following statements are equivalent:
\begin{enumerate}[(a)]
\item $A\diam B$;
\item $\Ra(B^*)=\Ra(A^*)\oplus \Ra(B^\dagger -A^\dagger )$;
\item $\Ra(A^*)\cap \Ra(B^\dagger -A^\dagger )=\{0\}$ and $\Ra(A^*)\subseteq \Ra(B^*)$.
\end{enumerate}
\end{theorem}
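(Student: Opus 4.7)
The plan is to use \eqref{diamond characterization}, which converts $A\diam B$ into the minus order relation $A^\dagger \minus B^\dagger$, and then invoke the rank criterion for the minus order from \eqref{minus order} together with the auxiliary Lemma \ref{lemma rango}. Throughout, I will rely on the standard identity $\Ra(A^\dagger)=\Ra(A^*)$ (and the analogous one for $B$), which lets us pass freely between ranges of the Moore--Penrose inverse and of the conjugate transpose.

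For the implication (a) $\Rightarrow$ (b), I would first invoke \eqref{diamond characterization} to rewrite $A\diam B$ as $A^\dagger \minus B^\dagger$. The implication chain \eqref{star minus space} then yields $\Ra(A^\dagger)\subseteq \Ra(B^\dagger)$, while \eqref{minus order} yields the rank equality $\rk(B^\dagger-A^\dagger)=\rk(B^\dagger)-\rk(A^\dagger)$. Applying Lemma \ref{lemma rango} to the matrices $A^\dagger$ and $B^\dagger-A^\dagger$, the inclusion $\Ra(A^\dagger)\subseteq \Ra(A^\dagger+(B^\dagger-A^\dagger))=\Ra(B^\dagger)$ forces $\Ra(B^\dagger)=\Ra(A^\dagger)+\Ra(B^\dagger-A^\dagger)$. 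Switching to $\Ra(A^*)$ and $\Ra(B^*)$ via the identity mentioned above, and observing that the rank equality matches the dimension counts, the sum is in fact direct, giving (b).

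The implication (b) $\Rightarrow$ (c) is immediate from the definition of direct sum. For (c) $\Rightarrow$ (a), I would reverse the previous argument: the assumption $\Ra(A^*)\subseteq \Ra(B^*)$ translates to $\Ra(A^\dagger)\subseteq \Ra(B^\dagger)$, so Lemma \ref{lemma rango} again gives $\Ra(B^\dagger)=\Ra(A^\dagger)+\Ra(B^\dagger-A^\dagger)$; the hypothesis of trivial intersection upgrades this sum to a direct one. Taking dimensions then yields $\rk(B^\dagger-A^\dagger)=\rk(B^\dagger)-\rk(A^\dagger)$, so by \eqref{minus order} we obtain $A^\dagger \minus B^\dagger$, and a final application of \eqref{diamond characterization} completes the proof.

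No major obstacle is anticipated, since every implication is a tight combination of Lemma \ref{lemma rango} with the equivalences \eqref{minus order} and \eqref{diamond characterization}; the one point requiring care is the rank bookkeeping when promoting the sum of subspaces to a direct sum, ensuring that $\rk(A^\dagger)+\rk(B^\dagger-A^\dagger)$ really equals $\rk(B^\dagger)=\dim \Ra(B^*)$, together with the routine but necessary identification $\Ra(A^\dagger)=\Ra(A^*)$.
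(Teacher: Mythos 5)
Your proof is correct, but it reaches the conclusion by a genuinely different route than the paper. Both arguments share the same skeleton: translate $A\diam B$ into $A^\dagger\minus B^\dagger$ via \eqref{diamond characterization}, use $\Ra(A^\dagger)=\Ra(A^*)$ and $\Ra(B^\dagger)=\Ra(B^*)$, and obtain the algebraic sum $\Ra(B^\dagger)=\Ra(A^\dagger)+\Ra(B^\dagger-A^\dagger)$ from Lemma \ref{lemma rango}. Where you diverge is in how the sum is shown to be direct and how the converse is closed. You invoke the rank-subtractivity form of the minus order in \eqref{minus order}, $\rk(B^\dagger-A^\dagger)=\rk(B^\dagger)-\rk(A^\dagger)$, and then let the dimension formula $\dim(U+V)=\dim U+\dim V-\dim(U\cap V)$ do all the work in both directions: in (a)$\Rightarrow$(b) it forces the intersection to be trivial, and in (c)$\Rightarrow$(a) it recovers the rank equality and hence $A^\dagger\minus B^\dagger$. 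The paper instead works with the inner-inverse form of the minus order: for (a)$\Rightarrow$(b) it picks $(A^\dagger)^-$ with $(A^\dagger)^-A^\dagger=(A^\dagger)^-B^\dagger$ and uses the idempotency of $A^\dagger(A^\dagger)^-$ to see that $\Ra(A^\dagger)\cap\Ra(B^\dagger-A^\dagger)$ sits inside $\Ra(A^\dagger(A^\dagger)^-)\cap\Nu(A^\dagger(A^\dagger)^-)=\{0\}$, and for (c)$\Rightarrow$(a) it manipulates $A^\dagger=B^\dagger(B^\dagger)^-A^\dagger$ to deduce $A^\dagger=A^\dagger(B^\dagger)^-A^\dagger$ for every inner inverse, i.e.\ $B^\dagger\{1\}\subseteq A^\dagger\{1\}$. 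Your dimension-counting argument is shorter and more elementary in the complex-matrix setting; the paper's inner-inverse computations are module-free and would survive in settings (such as the ring-theoretic framework of the diamond order) where rank and dimension arguments are unavailable. The only point to make explicit in a written version is the bookkeeping you already flag: that $\rk(A^\dagger)+\rk(B^\dagger-A^\dagger)=\rk(B^\dagger)$ combined with the sum decomposition is exactly what collapses the intersection to $\{0\}$, and conversely.
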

\begin{proof}
(a)$\Rightarrow$(b).  From \eqref{star diam space} we know that $A\spa B$, and so  $\Ra(A^*)\subseteq \Ra(B^*)$. Now, Lemma \ref{lemma rango} implies 
\begin{equation}\label{eq suma algebraica}
\Ra(B^*)=\Ra(B^\dagger)=\Ra(A^\dagger)+\Ra(B^\dagger-A^\dagger)=\Ra(A^*)+\Ra(B^\dagger-A^\dagger).
\end{equation}
Moreover, from  \eqref{diamond characterization} we have $A^\dagger \minus B^\dagger$. So,   $(A^\dagger)^-A^\dagger=(A^\dagger)^-B^\dagger$ for some inner inverse $(A^\dag)^-$ of $A^\dag$, which in turn is equivalent to $\Ra(B^\dag-A^\dag)\subseteq \Nu((A^\dag)^-)=\Nu(A^\dag(A^\dag)^-)$. In consequence, as $A^\dag(A^\dag)^-$ is idempotent,  $\Nu(A^\dag(A^\dag)^-)\cap \Ra(A^\dag(A^\dag)^-)=\{0\}$. Thus,
\begin{equation}\label{eq inter}
\begin{split}
\Ra(A^*)\cap \Ra(B^\dag-A^\dagger) & =\Ra(A^\dag)\cap \Ra(B^\dag-A^\dag)=\Ra(A^\dag(A^\dag)^-)\cap \Ra(B^\dag-A^\dag) \\ & \subseteq \Nu(A^\dag(A^\dag)^-)\cap \Ra(A^\dag(A^\dag)^-)=\{0\}.
\end{split}
\end{equation}
Now, the implication follows from \eqref{eq suma algebraica} and \eqref{eq inter}. \\
(b)$\Rightarrow$(c) Trivial. \\
(c)$\Rightarrow$(a)  From $\Ra(A^\dag)\subseteq \Ra(B^\dag)$ we have $A^\dag = B^\dag (B^\dag)^-  A^\dag $, where $(B^\dag)^-$ is an arbitrary inner inverse of $B^\dag$. Thus, $A^\dag=A^\dag (B^\dag)^-A^\dag+(B^\dag-A^\dag)(B^\dag)^-  A^\dag,$
whence 
$\Ra(A^\dag-A^\dag (B^\dag)^-A^\dag)\subseteq \Ra(B^\dag -A^\dag)$. In consequence, $\Ra(A^\dag-A^\dag (B^\dag)^-A^\dag)\subseteq \Ra(A^\dag)\cap \Ra(B^\dag-A^\dag)=\Ra(A^*)\cap \Ra(B^\dagger-A^\dagger)=\{0\}$, and so  $A^\dag=A^\dag (B^\dag)^-A^\dag$. Therefore, $B^\dag\{1\}\subseteq A^\dag\{1\}$ which is equivalent to 
 $A^\dag \minus B^\dag$ or equivalently $A\diam B$ by \eqref{diamond characterization}.
\end{proof}

\begin{corollary}  Let $A,B\in \Cm$. Then the following statements are equivalent:
\begin{enumerate}[(a)]
\item $A\diam B$;
\item $\ra(B^\dag -A^\dag)=\ra((I_n-Q_A)B^\dag)$ and $\Ra(A^*)\subseteq \Ra(B^*)$;
\item $A^\dag=QB^\dag $ for some idempotent matrix  $Q$ and $\Ra(A^*)\subseteq \Ra(B^*)$.
\end{enumerate}
\end{corollary}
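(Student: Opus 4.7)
My plan is to pivot both equivalences through Theorem~\ref{characterization 1}, which already identifies $A\diam B$ with the direct-sum decomposition $\Ra(B^*)=\Ra(A^*)\oplus\Ra(B^\dag-A^\dag)$.

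For (a)$\Leftrightarrow$(b), I would note that $\Ra(A^*)\subseteq\Ra(B^*)$ appears in both statements, so may be assumed throughout. Under this inclusion, Lemma~\ref{lemma rango} applied to the identity $B^\dag=A^\dag+(B^\dag-A^\dag)$ automatically gives $\Ra(B^*)=\Ra(A^*)+\Ra(B^\dag-A^\dag)$, so the direct-sum condition of Theorem~\ref{characterization 1} reduces to the dimensional equation $\rk(B^\dag-A^\dag)=\rk(B)-\rk(A)$. To translate this into the form in (b), I would use that $I_n-Q_A$ is the orthogonal projector onto $\Ra(A^*)^\perp$, so its restriction to $\Ra(B^\dag)=\Ra(B^*)\supseteq\Ra(A^*)$ has kernel exactly $\Ra(A^*)$, yielding $\rk((I_n-Q_A)B^\dag)=\rk(B)-\rk(A)$.

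For (a)$\Rightarrow$(c), I would construct $Q$ as the oblique projector of $\C^n$ onto $\Ra(A^*)$ along $\Ra(B^\dag-A^\dag)+\Nu(B)$. This decomposition of $\C^n$ is valid because Theorem~\ref{characterization 1} gives $\Ra(B^*)=\Ra(A^*)\oplus\Ra(B^\dag-A^\dag)$, while $\C^n=\Ra(B^*)\oplus\Nu(B)$ orthogonally. For any $y\in\C^m$, the splitting $B^\dag y=A^\dag y+(B^\dag-A^\dag)y$ puts the first summand in $\Ra(A^*)$ and the second in the prescribed complement, so $QB^\dag y=A^\dag y$, giving $A^\dag=QB^\dag$ with $Q$ idempotent.

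For (c)$\Rightarrow$(a), Lemma~\ref{lemma rango} again yields $\Ra(B^*)=\Ra(A^*)+\Ra(B^\dag-A^\dag)$, so by Theorem~\ref{characterization 1} it remains to verify $\Ra(A^*)\cap\Ra(B^\dag-A^\dag)=\{0\}$. Any $v$ in this intersection satisfies $v=A^\dag x=QB^\dag x\in\Ra(Q)$ on the one hand, and $v=(B^\dag-A^\dag)y=(I_n-Q)B^\dag y\in\Ra(I_n-Q)$ on the other; idempotency of $Q$ then forces $v=0$. The only delicate step is identifying the correct complement for $Q$ in (a)$\Rightarrow$(c); once one sees that the $\Ra(B^\dag-A^\dag)$ summand produced by Theorem~\ref{characterization 1} is precisely what is needed, the remainder is short bookkeeping and I expect no serious obstacle.
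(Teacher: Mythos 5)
Your proposal is correct, and its skeleton is the same as the paper's: every implication is routed through Theorem~\ref{characterization 1}. The differences are in the local computations. For (a)$\Leftrightarrow$(b) the paper first observes the algebraic identity $(I_n-Q_A)B^\dag=(I_n-Q_A)(B^\dag-A^\dag)$ and then applies the rank formula $\rk(PX)=\rk(X)-\dim(\Ra(X)\cap\Nu(P))$ to get $\rk((I_n-Q_A)B^\dag)=\rk(B^\dag-A^\dag)-\dim(\Ra(B^\dag-A^\dag)\cap\Ra(A^*))$, so condition (b) is read off directly as the vanishing of the intersection; you instead compute $\rk((I_n-Q_A)B^\dag)=\rk(B)-\rk(A)$ from the kernel of the projector restricted to $\Ra(B^*)$ and compare dimensions in the sum $\Ra(B^*)=\Ra(A^*)+\Ra(B^\dag-A^\dag)$ supplied by Lemma~\ref{lemma rango}. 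Both are valid; the paper's route avoids invoking the sum decomposition in this step. For (a)$\Rightarrow$(c) the paper gets $Q$ algebraically: from \eqref{diamond characterization} one has $A^\dag\minus B^\dag$, hence $A^\dag=A^\dag(A^\dag)^-B^\dag$ for a suitable inner inverse, and $Q=A^\dag(A^\dag)^-$ is idempotent; you build $Q$ geometrically as the oblique projector onto $\Ra(A^*)$ along $\Ra(B^\dag-A^\dag)\oplus\Nu(B)$, which is legitimate since Theorem~\ref{characterization 1} together with $\C^n=\Ra(B^*)\oplus\Nu(B)$ gives the three-fold decomposition you need. Your (c)$\Rightarrow$(a) is essentially identical to the paper's ($\Ra(A^\dag)\subseteq\Ra(Q)$, $\Ra(B^\dag-A^\dag)\subseteq\Nu(Q)$, intersection trivial). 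No gaps.
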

\begin{proof}
(a)$\Leftrightarrow$(b)
Firstly, we note that 
$(I_n-Q_A)B^\dag=(I_n-Q_A)(B^\dag -A^\dag)$, whence 
\begin{equation}\label{rank formula}
\begin{split}
\ra((I_n-Q_A)B^\dag)&= \ra(B^\dag-A^\dag)-\dim\left(\Ra(B^\dag-A^\dag)\cap \Nu(I_n-Q_A)\right)\\
&=\ra(B^\dag-A^\dag)-\dim\left(\Ra(B^\dag-A^\dag)\cap \Ra(A^\dag)\right)\\
&= \ra(B^\dag-A^\dag)-\dim\left(\Ra(B^\dag-A^\dag)\cap \Ra(A^*)\right).
\end{split}
\end{equation}
Now, the equivalence  is clear from \eqref{rank formula} and  Theorem \ref{characterization 1}.\\
(a)$\Rightarrow$(c) From \eqref{diamond characterization} we have  $(A^\dagger)^-A^\dagger=(A^\dagger)^-B^\dagger$ for some  $(A^\dag)^-\in A^\dag\{1\}$. Thus, $A^\dag=A^\dag (A^\dag)^-B^\dag$. Now, it is sufficient to choose $Q=A^\dag (A^\dag)^-$ which clearly is idempotent. Obviously, $\Ra(A^*)\subseteq \Ra(B^*)$ because $A\spa B$. \\
(c)$\Rightarrow$(a) Suppose $A^\dag=QB^\dag$, where $Q$ is idempotent. Therefore, $\Ra(B^\dag-A^\dag)\subseteq \Ra(I_n-Q)=\Nu(Q)$. Moreover, $\Ra(A^\dag)\subseteq \Ra(Q)$. Thus, $\Ra(A^\dag)\cap \Ra(B^\dag -A^\dag)\subseteq \Ra(Q)\cap \Nu(Q)=\{0\}$. Now, Theorem \ref{characterization 1} completes the proof. 
\end{proof}

We now provide a new canonical form of the diamond partial order. 
The  tools we use is the classical Singular Value Decomposition (SVD) and  one of its most important consequences for the case of a square matrix called  Hartwig-Spindelböck decomposition \cite[Corollary 6]{HaSp}. 

\begin{theorem}\label{theorem canonical} Let $A,B$ $\in \Cm$. Then the following statements
are equivalent:
\begin{enumerate}[a)]
\item $A\diam B$;
\item There exist two unitary matrices $U$ and $V$  such that
\begin{equation*} 
A=U\begin{bmatrix}
C_1 & C_2 & 0\\
                       0 & 0 & 0 \\
                       0 & 0 & 0
                     \end{bmatrix}V^* \quad\text{and}\quad
    B=U\begin{bmatrix}
                       D_1 & D_2 & 0\\
                       D_3 & D_4 & 0 \\
                       0 & 0 & 0
                     \end{bmatrix}V^*,
\end{equation*}
where $C_1 C_1^*+C_2 C_2^*=C_1 D_1^*+C_2 D_2^*$ is nonsingular and $D_3$, $D_4$ are arbitrary matrices of adequate size satisfying $\rk(B)= \rk\left( \begin{bmatrix}
                       D_1 & D_2 \\
                       D_3 & D_4 
                     \end{bmatrix}\right)$. 
\end{enumerate}
\end{theorem}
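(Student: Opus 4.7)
The strategy is to choose the unitary matrices $U$ and $V$ from orthonormal bases adapted to the subspace inclusions implied by $A \spa B$, and then translate the diamond equation into a concrete block identity.

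Setting $r = \rk(A)$ and $s = \rk(B)$, the implication $A \diam B \Rightarrow A \spa B$ from \eqref{star diam space} provides $\Ra(A) \subseteq \Ra(B)$ and $\Nu(B) \subseteq \Nu(A)$. I would build $U$ so that its first $r$ columns form an orthonormal basis of $\Ra(A)$, the next $s-r$ columns extend this to a basis of $\Ra(B)$, and the last $m-s$ columns span $\Ra(B)^{\perp}$. Analogously, the first $s$ columns of $V$ form an orthonormal basis of $\Ra(B^*)$ and the last $n-s$ columns span $\Nu(B)$. The orthogonality relations then force all blocks of $U^* A V$ outside its top-left $r \times s$ submatrix to vanish (using $\Nu(B) \subseteq \Nu(A)$ on the column side and the basis choice of $U$ on the row side), and analogously confine $U^* B V$ to its top-left $s \times s$ block. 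This produces the claimed forms, defining $C_1, C_2, D_1, D_2, D_3, D_4$ as the corresponding blocks.

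It remains to check the two side conditions. Since $U$ and $V$ are unitary, $\rk\begin{bmatrix} D_1 & D_2 \\ D_3 & D_4 \end{bmatrix} = \rk(B) = s$, and $\rk[C_1, C_2] = \rk(A) = r$, so $[C_1, C_2]$ has full row rank and $C_1 C_1^* + C_2 C_2^* = [C_1, C_2][C_1, C_2]^*$ is an $r \times r$ nonsingular matrix. Rewriting the diamond equation $AA^* A = AB^* A$ as $MM^*M = MN^*M$ with $M = U^* A V$ and $N = U^* B V$, a direct block multiplication collapses it to
\begin{equation*}
\bigl(C_1 C_1^* + C_2 C_2^* - C_1 D_1^* - C_2 D_2^*\bigr)[C_1, C_2] = 0,
\end{equation*}
and right-multiplying by $[C_1, C_2]^*$ yields $C_1 C_1^* + C_2 C_2^* = C_1 D_1^* + C_2 D_2^*$.

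For the converse, I would reverse the computations: the rank and nonsingularity hypotheses pin down the column spaces of the inner $A$- and $B$-matrices as $\C^{r} \oplus \{0\}$ and $\C^{s} \oplus \{0\}$ in the block decomposition, yielding $A \spa B$, while the same block multiplication turns the identity $C_1 C_1^* + C_2 C_2^* = C_1 D_1^* + C_2 D_2^*$ back into $AA^*A = AB^*A$. The main obstacle is the block-size bookkeeping—verifying that the purported zero blocks really are zero, which hinges on the basis choices above, and confirming that the diamond identity reduces cleanly to a single equation in $C_1, C_2, D_1, D_2$; once those are in place, the remainder of the argument is routine linear algebra.
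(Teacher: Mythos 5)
Your proposal is correct and arrives at the same block form and the same final algebraic identity as the paper, but it constructs the unitaries by a different route. The paper first takes an SVD of $B$, uses $A\spa B$ to confine $A$ to the leading $r\times r$ corner (with $r=\rk(B)$), and then applies the Hartwig--Spindelb\"{o}ck decomposition to that corner block, absorbing the resulting unitary into both sides; this produces $C_1=\Sigma K$, $C_2=\Sigma L$ with $KK^*+LL^*=I$, so that $C_1C_1^*+C_2C_2^*=\Sigma^2$ and nonsingularity is immediate. You instead build $U$ and $V$ directly from orthonormal bases adapted to $\Ra(A)\subseteq\Ra(B)$, $\Ra(B^*)$ and $\Nu(B)\subseteq\Nu(A)$, which is more elementary (no SVD or Hartwig--Spindelb\"{o}ck needed) and obtains nonsingularity from the full row rank of $\left[\begin{smallmatrix} C_1 & C_2\end{smallmatrix}\right]$; the extra multiplicative structure the paper gets on $C_1,C_2$ plays no role in the statement. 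From there the two arguments coincide: both reduce $AA^*A=AB^*A$ to $(C_1C_1^*+C_2C_2^*-C_1D_1^*-C_2D_2^*)\left[\begin{smallmatrix} C_1 & C_2\end{smallmatrix}\right]=0$ and cancel the nonsingular Gram matrix after multiplying on the right by $\left[\begin{smallmatrix} C_1 & C_2\end{smallmatrix}\right]^*$. One caveat, which your write-up shares with the paper's one-line treatment of (b)$\Rightarrow$(a): your claim that the hypotheses ``pin down'' the column space of the inner $B$-matrix as $\C^{s}\oplus\{0\}$ really relies on $\left[\begin{smallmatrix} D_1 & D_2\\ D_3 & D_4\end{smallmatrix}\right]$ being square of order $\rk(B)$ and hence nonsingular; the displayed rank equality by itself is automatic under unitary equivalence and would not force $\Ra(A^*)\subseteq\Ra(B^*)$, so that interpretation should be made explicit in the converse direction.
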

\begin{proof}
(a)$\Rightarrow$(b) Since $A\diam B$ we have $A\spa B$ and $AA^*A=AB^*A$. 
 Let $B=U_1\begin{bmatrix}
       D & 0\\
       0 & 0\\
     \end{bmatrix}V_1^*$ be a SVD of $B$, where $D$ is a  positive definite diagonal matrix of order $r=\rk(B)$ and $U_1,V_1$ are unitary. Since
\begin{equation} \label{charact ord space}
\Ra(A)\subseteq \Ra(B) ~\Leftrightarrow P_BA=A \quad \text{and} \quad \Ra(A^*)\subseteq \Ra(B^*) ~\Leftrightarrow AQ_B=A, 
\end{equation}
partitioning $A$ in conformation with partition
     of $B,$ say $A=U_1\begin{bmatrix}
      A_1 & A_2 \\
       A_3 & A_4 \\
        \end{bmatrix}V_1^*,$
we get
\[\Ra(A)\subseteq \Ra(B) ~
\Leftrightarrow \begin{bmatrix}
       I_r & 0\\
       0 & 0\\
     \end{bmatrix}\begin{bmatrix}
       A_1 & A_2\\
       A_3 & A_4\\
     \end{bmatrix}=\begin{bmatrix}
       A_1 & A_2\\
       A_3 & A_4\\
     \end{bmatrix} ~\Leftrightarrow~  A_3=0,~ A_4=0.\]
Also, $\Ra(A^*)\subseteq \Ra(B^*)$ is equivalent to \[ \begin{bmatrix}
       A_1 & A_2\\
       0 & 0\\
     \end{bmatrix}\begin{bmatrix}
       I_r & 0\\
       0 & 0\\
     \end{bmatrix}=\begin{bmatrix}
       A_1 & A_2\\
       0 & 0\\
     \end{bmatrix} ~\Leftrightarrow~ A_2=0.\]
Thus, $A=U_1\begin{bmatrix}                                                                     A_1 & 0 \\
0 & 0 \\
\end{bmatrix}V^*_1.$  
Now, let $A_1=U_2\begin{bmatrix}
                 \Sigma K & \Sigma L  \\
                 0 & 0 \\
               \end{bmatrix}U^*_2$ be the Hartwig-Spindelbok decomposition of $A_1$, where $U_2\in \Cnn$ is unitary, $\Sigma$  is the diagonal matrix of non null singular values of $A_1$,  and $K \in \C^{t\times t}$, $L \in \mathbb{C}^{t\times (n-t)}$ satisfy $KK^* +LL^* = I_t$ with $t=\ra(A_1)$.\\
Define \begin{center}
$U=U_1 \begin{bmatrix}
              U_2 & 0 \\
              0 & I_{m-r} \\
              \end{bmatrix} $ and $V=V_1\begin{bmatrix}
             U_2& 0 \\
              0 & I_{n-r} \\
              \end{bmatrix} $. 
\end{center}Then $U$ and $V$ are unitary and
               \[U^*AV=\begin{bmatrix}
                       \Sigma K & \Sigma L & 0\\
                       0 & 0 & 0 \\
                       0 & 0 & 0
                     \end{bmatrix}\quad\text{and}\quad U^*BV= 
                     \begin{bmatrix}
                       D_1 & D_2 & 0\\
                       D_3 & D_4 & 0 \\
                       0 & 0 & 0
                     \end{bmatrix},\]
where $\begin{bmatrix}
                       D_1 & D_2\\
                       D_3 & D_4
                     \end{bmatrix}= U^*_2DU_2$. Note that $\rk(B)= \rk\left( \begin{bmatrix}
                       D_1 & D_2 \\
                       D_3 & D_4 
                     \end{bmatrix}\right)$ because $\rk(B)=\rk(D)=r$.  \\
Let $C_1=\Sigma K$ and $C_2=\Sigma L$. As $KK^* +LL^* = I_t$ we have $C_1 C_1^*+C_2 C_2^*=\Sigma^2$, and therefore $C_1 C_1^*+C_2 C_2^*$ is  nonsingular. 
Now, condition  $AA^*A=AB^*A$ is true if and only if following conditions simultaneously hold:\\
(i) $(C_1 C_1^*+C_2 C_2^*)C_1= (C_1 D_1^*+C_2 D_2^*)C_1$;\\
(ii) $(C_1 C_1^*+C_2 C_2^*)C_2= (C_1 D_1^*+C_2 D_2^*)C_2$.\\
Postmultiplying (i) by $C_1^*$, (ii) by $C_2^*$, and adding them we obtain \[(C_1 C_1^*+C_2 C_2^*)^2=(C_1 D_1^*+C_2 D_2^*)(C_1 C_1^*+C_2 C_2^*),\] whence  $C_1 C_1^*+C_2 C_2^*=C_1 D_1^*+C_2 D_2^*$. 
\\ (b)$\Rightarrow$(a) It is an immediate consequence of Definition \ref{def BaHa} and \eqref{charact ord space}. 
\end{proof}

We finish this section with a result involving the orthogonal projectors associated with the predecessors and successors in the partial orders considered.

\begin{theorem}  Let $A,B\in \Cm$. Then the following statements hold:
\begin{enumerate}[(a)]
\item If $A\st B$ then $P_A\st P_B$.
\item If $A\minus B$ then $P_A\minus P_B$.
\item If $A\diam B$ then $P_A\diam P_B$.
\end{enumerate}
\end{theorem}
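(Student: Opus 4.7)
The plan is to observe that all three hypotheses actually give something much stronger than required: since $P_A$ and $P_B$ are orthogonal projectors (hermitian and idempotent), the three partial orders in question collapse onto the single condition $\Ra(A)\subseteq \Ra(B)$, so it suffices to prove the strongest conclusion $P_A \st P_B$ from the weakest common piece of information and then deduce (b) and (c) for free.

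First I would extract the common ingredient. Each of the three hypotheses $A \st B$, $A \minus B$, $A \diam B$ implies $A \spa B$ by the chains \eqref{star minus space} and \eqref{star diam space}, and in particular $\Ra(P_A)=\Ra(A)\subseteq \Ra(B)=\Ra(P_B)$. Because $P_B$ acts as the identity on $\Ra(B)$, this inclusion yields $P_B P_A = P_A$; taking conjugate transposes and using $P_A^*=P_A$, $P_B^*=P_B$ gives $P_A P_B = P_A$ as well.

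Next I would verify $P_A \st P_B$ by a direct substitution into the defining identities. Since $P_A^2=P_A$, we get $P_A^* P_A = P_A = P_A P_B = P_A^* P_B$ and $P_A P_A^* = P_A = P_B P_A = P_B P_A^*$, which proves~(a). Part (b) then follows by applying the first implication of \eqref{star minus space} to the projectors (so $P_A \st P_B$ forces $P_A \minus P_B$), and part (c) by applying the first implication of \eqref{star diam space} (so $P_A \st P_B$ forces $P_A \diam P_B$). I do not foresee any substantial obstacle here; the only technical care needed is the symmetric step that turns $P_B P_A = P_A$ into $P_A P_B = P_A$, which relies on the hermiticity of both projectors and would fail for arbitrary oblique idempotents.
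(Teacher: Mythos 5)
Your proposal is correct and matches the paper's own argument, which likewise reduces everything to $A\spa B$ and the equivalence $M\subseteq L \Leftrightarrow P_MP_L=P_LP_M=P_M$; your version simply writes out the details and packages (b) and (c) as consequences of (a) via the implication chains, which the paper also implicitly relies on.
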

\begin{proof} 
(a) It is clear from \eqref{star minus space} and the equivalence $M\subseteq L \Leftrightarrow P_MP_L=P_LP_M=P_M$, where $M$ and $L$ are two arbitrary subspaces.\\
The proofs of (b) and (c) are similar to (a).
\end{proof} 
The reverse implications of the above theorem are not true. For example, take the matrices
 
\begin{equation*}
A=\left[\begin{array}{c c}
1 &1\\
0 &0
\end{array}\right] \quad \text{and}\quad B=\left[\begin{array}{c c}
1 &0 \\
0 & 1
\end{array}\right].
\end{equation*}
A simple computation leads to
\begin{equation*}
P_A=\left[\begin{array}{c c}
1 &0\\
0 &0
\end{array}\right] \quad \text{and}\quad P_B=\left[\begin{array}{c c}
1 &0 \\
0 & 1
\end{array}\right].
\end{equation*}
As the orders $\st$, $\minus$, and $\diam$ are equivalent on the set of orthogonal projectors, it is sufficient to check that $P_A\st P_B$ holds, but $A\st B$ is not true.

\begin{remark}Note that $A\spa B$ if and only if $P_A\spa P_B$. In fact, it follows directly from the definition of space pre-order and the fact that $\Ra(M)=\Ra(P_M)$ for an arbitrary matrix $M$.
\end{remark}
\section{A connection with the left and right star partial orders}

To generalize the definition of the star partial order, Baksalary and Mitra \cite{BaMi} proposed the left star and right star orders, defined respectively as:
\begin{eqnarray}
A \lstar B &~\Leftrightarrow~& A^*A= A^* B ~~\text{and}~~ \Ra(A)\subseteq \Ra(B), \label{left star} \\
A \rstar B &~\Leftrightarrow~& AA^*=BA^* ~~\text{and}~~ \Ra(A^*)\subseteq \Ra(B^*). \label{right star} \nonumber
\end{eqnarray}

In \cite{LePaTh}, it was proved that the left star (resp., right star) partial order implies the diamond partial order under an additional condition:
\begin{eqnarray}
A\lstar B \text{ and } \Ra(A^*)\subseteq \Ra(B^*) &\Rightarrow&  A \diam B, \label{left star implies diamond} \\
A\rstar B ~\text{ and } ~\Ra(A)\subseteq \Ra(B) &\Rightarrow&  A \diam B. \label{right star implies diamond}
\end{eqnarray}

We will prove that the conditions on column spaces in \eqref{left star implies diamond} and \eqref{right star implies diamond} can be omitted. Moreover, we will prove that the left star and right star orders are located between the star and diamond orders in the sense that

\[\begin{array}{c c c c c}
 & & A\lstar B & &\\
 & \nearrow & &\searrow & \\
 A\st B & & & & A\diam B \\
  & \searrow & & \nearrow &\\
 & & A\rstar B & &
\end{array}\]

\begin{theorem} \label{theorem implications} Let $A,B\in \Cm$.  Consider the following statements:
\begin{enumerate}[(a)]
\item $A\diam B$;
\item $A\lstar B$;
\item $A\rstar B$.
\end{enumerate}
Then (b)$\Rightarrow$(a) and (c)$\Rightarrow$(a).
\end{theorem}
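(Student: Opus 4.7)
The plan is to show that the range inclusions appearing as side assumptions in \eqref{left star implies diamond} and \eqref{right star implies diamond} are in fact automatic, so that $A \lstar B$ alone (and, symmetrically, $A \rstar B$ alone) already forces $A \diam B$. Concretely, for each of (b) and (c) I will verify the two ingredients in Definition \ref{def BaHa}: the space preorder $A \spa B$ and the equation $AB^*A = AA^*A$. The key observation is that taking conjugate transposes of the defining equation of $\lstar$ (resp., $\rstar$) produces an identity that simultaneously yields both ingredients.

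For (b) $\Rightarrow$ (a), I would start with \eqref{left star}: $A^*A = A^*B$ together with $\Ra(A) \subseteq \Ra(B)$. Taking conjugate transposes of the equation gives $A^*A = B^*A$. Pre-multiplying by $A$ produces $AA^*A = AB^*A$, which is precisely the second condition of Definition \ref{def BaHa}. To obtain the missing inclusion $\Ra(A^*) \subseteq \Ra(B^*)$, I invoke the standard identity $\Ra(A^*) = \Ra(A^*A)$ and rewrite it as $\Ra(A^*) = \Ra(B^*A) \subseteq \Ra(B^*)$. Combined with the hypothesised $\Ra(A) \subseteq \Ra(B)$, this establishes $A \spa B$ and completes the implication.

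The implication (c) $\Rightarrow$ (a) is the dual argument. From $AA^* = BA^*$ I take conjugate transposes to obtain $AA^* = AB^*$, and post-multiplying by $A$ yields $AA^*A = AB^*A$. The remaining inclusion $\Ra(A) \subseteq \Ra(B)$ follows from $\Ra(A) = \Ra(AA^*) = \Ra(BA^*) \subseteq \Ra(B)$, which together with the hypothesised $\Ra(A^*) \subseteq \Ra(B^*)$ gives $A \spa B$. There is no genuinely hard step here; the only care required is recognising that the conjugate transpose of the $\lstar$ (resp., $\rstar$) equation automatically encodes both the diamond equation $AB^*A = AA^*A$ and the missing half of the space preorder, via the elementary identities $\Ra(A^*) = \Ra(A^*A)$ and $\Ra(A) = \Ra(AA^*)$. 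That is the whole content of the improvement over \eqref{left star implies diamond}--\eqref{right star implies diamond}.
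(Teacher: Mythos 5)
Your proposal is correct and follows essentially the same route as the paper: take the conjugate transpose of the defining equation of $\lstar$ (resp.\ $\rstar$) to get $A^*A=B^*A$ (resp.\ $AA^*=AB^*$), multiply by $A$ to obtain $AA^*A=AB^*A$, and use $\Ra(A^*)=\Ra(A^*A)$ (resp.\ $\Ra(A)=\Ra(AA^*)$) to recover the missing half of the space pre-order. No gaps; the paper's argument is the same, only stated more tersely.
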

\begin{proof}
(b)$\Rightarrow$(a) From \eqref{left star} we deduce $A^*A=B^*A$, whence $\Ra(A^*)=\Ra(A^*A)\subseteq \Ra(B^*)$ and $AA^*A=AB^*A$. In consequence, as $\Ra(A)\subseteq \Ra(B)$, we conclude that $A\spa B$ and $AA^*A=AB^*A$, that is, $A\diam B$. \\
(c)$\Rightarrow$(a) It is similar to the proof of (b)$\Rightarrow$(a).
\end{proof}

Notice that the converse implications in the above theorem are not true, which can be seen, for example, by using the matrices 
\[A=\begin{bmatrix}
   1 & 0 \\
   0 & 0
\end{bmatrix}
~~\text{and}~~
B=\begin{bmatrix}
   1 & 1 \\
   1 & -1 
\end{bmatrix}.\]  

In the following two results, we give some conditions under which the converse implications of Theorem \ref{theorem implications} are true.

\begin{theorem} Let $A,B\in \Cm$. Then the following statements are equivalent:
\begin{enumerate}[(a)]
\item $A\lstar B$;
\item $A\diam B$ and $A^*A=A^*B$;
\item $A\diam B$ and $A^\dag A=A^\dag B$;
\item $A\diam B$ and $A^*B$ is Hermitian.
\end{enumerate}
\end{theorem}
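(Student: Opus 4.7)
The plan is to reduce everything to the equivalence (a)$\Leftrightarrow$(b), and then to handle the remaining equivalences by algebraic manipulations involving the Moore--Penrose inverse and the Hermitian structure. I would arrange the proof around three short blocks: (a)$\Leftrightarrow$(b), (b)$\Leftrightarrow$(c), and (b)$\Leftrightarrow$(d).

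For (a)$\Leftrightarrow$(b), the forward direction is immediate: Theorem \ref{theorem implications} states that the left star order implies the diamond order, and $A^*A=A^*B$ is literally part of the definition of $\lstar$ in \eqref{left star}. For the converse, $A\diam B$ yields $A\spa B$ by \eqref{star diam space}, so in particular $\Ra(A)\subseteq\Ra(B)$, and combined with the given $A^*A=A^*B$ this is exactly $A\lstar B$ by \eqref{left star}.

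For (b)$\Leftrightarrow$(c), I would use the classical identity $A^\dag=(A^*A)^\dag A^*$. Premultiplying $A^*A=A^*B$ by $(A^*A)^\dag$ produces $A^\dag A=A^\dag B$. Conversely, premultiplying $A^\dag A=A^\dag B$ by $A^*A$ and using $A^*AA^\dag=A^*(AA^\dag)^*=(AA^\dag A)^*=A^*$ recovers $A^*A=A^*B$. The diamond hypothesis appears identically on both sides and plays no role in this algebraic manipulation.

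The real content lies in (b)$\Leftrightarrow$(d). The direction (b)$\Rightarrow$(d) is immediate, since $A^*A$ is Hermitian. For the nontrivial implication (d)$\Rightarrow$(b), I would start from the diamond identity $AA^*A=AB^*A$ and use the Hermitian hypothesis $B^*A=A^*B$ to rewrite the right-hand side as $AA^*B$, obtaining $AA^*(B-A)=0$. Premultiplying by $(B-A)^*$ gives $(A^*(B-A))^*(A^*(B-A))=0$, which forces $A^*(B-A)=0$, i.e., $A^*A=A^*B$. The only subtle point — and what I expect to be the main (if mild) obstacle — is this last passage from $AA^*(B-A)=0$ to $A^*(B-A)=0$, which rests on the standard fact that $Y^*Y=0$ implies $Y=0$ for any complex matrix $Y$.
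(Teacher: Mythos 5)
Your proposal is correct and follows essentially the same route as the paper: (a)$\Leftrightarrow$(b) via Theorem \ref{theorem implications} and the definition of $\lstar$, and (d)$\Rightarrow$(b) by combining $AB^*A=AA^*A$ with $A^*B=B^*A$ to get $AA^*(B-A)=0$ and then cancelling $A$. Your two minor variants --- using $A^\dag=(A^*A)^\dag A^*$ for (b)$\Leftrightarrow$(c) instead of the null-space inclusion $\Nu(A^*)=\Nu(A^\dag)$, and deducing $A^*(B-A)=0$ from $(A^*(B-A))^*(A^*(B-A))=0$ rather than from $\Nu(AA^*)=\Nu(A^*)$ --- are just unwindings of the same standard facts the paper invokes.
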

\begin{proof}
(a)$\Rightarrow$(b) Follows from Theorem \ref{theorem implications} and \eqref{left star}.  \\
(b)$\Rightarrow$(a) As $A\diam B$, from \eqref{star diam space} we obtain $A\spa B$, and so $\Ra(A)\subseteq \Ra(B)$. Now, the implication follows from \eqref{left star}. \\
(b)$\Leftrightarrow$(c) Notice that $A^*A=A^*B$ holds if and only if $\Ra(A-B)\subseteq \Nu(A^*)=\Nu(A^\dag)$, or equivalently, $A^\dag(A-B)=0$. Thus, $A^*A=A^*B$ is true if and only if $A^\dag A=A^\dag B$. \\
(b)$\Rightarrow$(d) Trivial. \\
(d)$\Rightarrow$(b) Since $A^*B$ is Hermitian, we have $AA^*B= AB^*A =AA^*A$, or equivalently $AA^*(B-A)=0$. So, $\Ra(B-A) \subseteq \Nu(AA^*)=\Nu(A^*)$, which implies $A^*(B-A)=0$. This completes the proof.
\end{proof}

\begin{theorem} Let $A,B\in \Cm$. Then the following statements are equivalent:
\begin{enumerate}[a)]
\item $A\rstar B$;
\item $A\diam B$ and $AA^*=BA^*$;
\item $A\diam B$ and $AA^\dag=BA^\dag$;
\item $A\diam B$ and $BA^*$ is Hermitian.
\end{enumerate}
\end{theorem}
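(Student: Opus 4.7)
My plan is to mirror the proof of the preceding theorem (the left-star version), taking adjoints at the appropriate places. For (a)$\Rightarrow$(b), I would apply Theorem \ref{theorem implications}(c)$\Rightarrow$(a) to obtain $A\diam B$, and read off $AA^*=BA^*$ directly from the definition of $\rstar$ in \eqref{right star}. For the converse (b)$\Rightarrow$(a), the hypothesis $A\diam B$ combined with \eqref{star diam space} gives $A\spa B$, so in particular $\Ra(A^*)\subseteq \Ra(B^*)$; together with $AA^*=BA^*$, this is exactly \eqref{right star}.

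The equivalence (b)$\Leftrightarrow$(c) is a purely algebraic fact that does not use $A\diam B$. I would observe that $AA^*=BA^*$ is equivalent to $(A-B)A^*=0$, i.e., $\Ra(A^*)\subseteq \Nu(A-B)$, and analogously $AA^\dag=BA^\dag$ is equivalent to $\Ra(A^\dag)\subseteq \Nu(A-B)$. Since $\Ra(A^\dag)=\Ra(A^*)$, the two conditions coincide.

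The implication (b)$\Rightarrow$(d) is immediate from $BA^*=AA^*$ because $AA^*$ is Hermitian. The only non-trivial step is (d)$\Rightarrow$(b). Here I would start from $BA^*$ being Hermitian, which gives $BA^*=(BA^*)^*=AB^*$. Postmultiplying by $A$ yields $BA^*A=AB^*A$, and invoking $A\diam B$ (which provides $AB^*A=AA^*A$ by Definition \ref{def BaHa}) gives $BA^*A=AA^*A$, that is, $(B-A)A^*A=0$. Using the standard identity $\Ra(A^*A)=\Ra(A^*)$ (which follows from $A^*=A^*AA^\dag$), I can drop the trailing $A$ to conclude $(B-A)A^*=0$, i.e., $BA^*=AA^*$.

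The main obstacle is precisely this last step (d)$\Rightarrow$(b), since one must extract the stronger equality $AA^*=BA^*$ from the weaker Hermiticity of $BA^*$ together with the diamond condition. The key maneuver is to multiply the Hermiticity identity on the right by $A$ so that the diamond relation $AB^*A=AA^*A$ can be applied, and then to cancel the trailing $A$ via the range equality $\Ra(A^*A)=\Ra(A^*)$. Everything else reduces to symmetry considerations and direct use of the definitions in \eqref{right star} and \eqref{star diam space}.
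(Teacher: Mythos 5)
Your proposal is correct and follows essentially the same route as the paper, which omits the proof of this theorem precisely because it is the adjoint-dual of the left-star version proved just before it. Your handling of (d)$\Rightarrow$(b) — postmultiplying the Hermiticity identity $BA^*=AB^*$ by $A$, invoking $AB^*A=AA^*A$, and cancelling via $\Ra(A^*A)=\Ra(A^*)$ — is exactly the mirror of the paper's argument (which premultiplies by $A$ and uses $\Nu(AA^*)=\Nu(A^*)$).
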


\section{The diamond partial order on the set of square matrices}

By using the Hartwig-Spindelböck decomposition it is well known that every matrix $B\in \Cnn$ of rank $r>0$ can be represented in the form
\begin{equation}\label{HS of B}
B=U\left[\begin{array}{cc}
\Sigma K & \Sigma L\\
0 &0
\end{array}\right] U^*,
\end{equation}
where $U\in \Cnn$ is unitary, $\Sigma=\text{diag}(\sigma_1 I_{r_1},\sigma_2 I_{r_2},\dots, \sigma_t I_{r_t})$ is the diagonal matrix of singular values of $B$, $\sigma_1>\sigma_2>\cdots>\sigma_t>0$, $r_1+r_2+\cdots+r_t=r$, and $K \in \Crr$, $L \in \mathbb{C}^{r\times (n-r)}$ satisfy $KK^* +LL^* = I_r$. In this case, the Moore-Penrose inverse of $B$ is given by 
\begin{equation}\label{MP of B}
B^\dag=U\left[\begin{array}{cc}
K^*\Sigma^{-1} & 0\\
L^* \Sigma^{-1} & 0
\end{array}\right]U^*.
\end{equation}

The predecessors of a given matrix under the diamond partial order on the set of square matrices are found in \cite{MaRuTh} by using decomposition \eqref{HS of B}.

\begin{theorem}\cite [Theorem 6] {MaRuTh}\label{HS diamond order} Let $B\in \Cnn$ be a nonnull matrix written as in \eqref{HS of B}.Then the following statements are equivalent:
\begin{enumerate}[(a)]
\item there exists a matrix $A\in \Cnn$ such that $A\diam B$;
\item there exists an unique idempotent matrix $T\in \Crr$ such that 
\begin{equation}\label{HS of A}
A=U\left[\begin{array}{cc}
(\Sigma^{-1}T)^\dag K & (\Sigma^{-1}T)^\dag L\\
0&0
\end{array}\right]U^*.
\end{equation}
\end{enumerate}
\end{theorem}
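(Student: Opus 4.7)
The plan is to reduce the problem to a small block-matrix calculation using the HS form of $B$, and then recognize the resulting constraint on the free block as idempotency of $T$. Since $\diam$ is unitarily invariant---both defining conditions in Definition~\ref{def BaHa} are preserved by $(A,B)\mapsto (U^*AU, U^*BU)$---I may work with the rotated matrices $\tilde B = \begin{bmatrix}\Sigma K & \Sigma L \\ 0 & 0\end{bmatrix}$ and $\tilde A = U^*AU$, suppressing the $U$'s throughout the calculation and reinserting them at the end.

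My first step is to pin down the block shape of $\tilde A$ from $A \spa B$ (which holds by \eqref{star diam space}). The inclusion $\Ra(A) \subseteq \Ra(B)$ forces the bottom block-row of $\tilde A$ to vanish, because $\Ra(\tilde B)$ is the span of the first $r$ standard basis vectors. The inclusion $\Ra(A^*) \subseteq \Ra(B^*)$, combined with $\Ra(\tilde B^*) = \Ra\bigl(\begin{bmatrix}K^* \\ L^*\end{bmatrix}\bigr)$ (which uses $\Sigma$ invertible), forces the top block-row of $\tilde A$ to be $[MK,\,ML]$ for some $M \in \Crr$. Because $KK^* + LL^* = I_r$, this $M$ is uniquely determined by $\tilde A$ via $M = (MK)K^* + (ML)L^*$. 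This is essentially the argument from the proof of Theorem~\ref{theorem canonical} specialized to $U_2 = I_r$.

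Next, I would translate the algebraic condition $AA^*A = AB^*A$ into a single identity on $M$. Using $KK^* + LL^* = I_r$, direct block multiplication gives $AA^* = U\begin{bmatrix}MM^* & 0 \\ 0 & 0\end{bmatrix}U^*$ and $AB^* = U\begin{bmatrix}M\Sigma & 0 \\ 0 & 0\end{bmatrix}U^*$, so the diamond condition becomes $(MM^*M - M\Sigma M)[K,\,L] = 0$. Since $\begin{bmatrix}K^* \\ L^*\end{bmatrix}$ is a right inverse of $[K,\,L]$, this collapses to the single equation
\[MM^*M = M\Sigma M.\]

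The heart of the proof, and the step I expect to be the main obstacle, is showing that $MM^*M = M\Sigma M$ is equivalent to $T := \Sigma M^\dag$ being idempotent, with $M = (\Sigma^{-1}T)^\dag$ recovering the parametrization in \eqref{HS of A}. My approach is to fix a compact SVD $M = W\Delta X^*$ with $W^*W = X^*X = I_s$ (where $s = \rk(M)$) and $\Delta$ positive diagonal. Substituting and then canceling $W$ on the left and $X$ on the right reduces $MM^*M = M\Sigma M$ cleanly to $\Delta = X^*\Sigma W$. On the other side, $T^2 = T$ is equivalent, via multiplication by $\Sigma^{-1}$, to $M^\dag \Sigma M^\dag = M^\dag$, which the analogous SVD manipulation reduces to $\Delta = W^*\Sigma X$. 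Since $\Sigma$ is real diagonal, these two identities are conjugate transposes of one another and so coincide. Uniqueness of $T$ is then automatic: $T = \Sigma M^\dag$ is a function of $M$, and $M$ itself is recovered from $A$ by Step~1; conversely $(\Sigma^{-1}T)^\dag = (M^\dag)^\dag = M$ by $(X^\dag)^\dag = X$. The converse implication (b)$\Rightarrow$(a) then follows by reversing Steps~1--3 with $M := (\Sigma^{-1}T)^\dag$.
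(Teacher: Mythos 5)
Your argument is correct. Note first that the paper does not actually prove this statement: it is imported verbatim from Malik--Rueda--Thome \cite[Theorem~6]{MaRuTh}, so there is no in-paper proof to match against; what you have supplied is a self-contained derivation. Your route is very much in the spirit of the paper's own Theorem~\ref{theorem canonical}: reduce $A\spa B$ to the block shape $U^*AU=\left[\begin{smallmatrix} MK & ML\\ 0&0\end{smallmatrix}\right]$ with $M$ uniquely recovered via $M=(MK)K^*+(ML)L^*$, and reduce $AA^*A=AB^*A$ to $MM^*M=M\Sigma M$ by cancelling $[K,\ L]$ against its right inverse $\left[\begin{smallmatrix}K^*\\ L^*\end{smallmatrix}\right]$. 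The genuinely new ingredient you add, and the one that makes the parametrization by idempotents work, is the compact-SVD computation showing that $MM^*M=M\Sigma M$ (equivalently $\Delta=X^*\Sigma W$) and $M^\dag\Sigma M^\dag=M^\dag$ (equivalently $\Delta=W^*\Sigma X$) are conjugate transposes of each other because $\Sigma$ and $\Delta$ are real diagonal; this cleanly identifies $T=\Sigma M^\dag$ as idempotent with $(\Sigma^{-1}T)^\dag=M$, and uniqueness follows since $\dag$ is an involution. All steps check out, including the degenerate case $M=0$ (where $T=0$). Two cosmetic caveats: the theorem as printed quantifies $A$ awkwardly (statement (a) is literally trivial via $A=0$), and you correctly prove the intended reading ``$A\diam B$ iff $A$ has the form \eqref{HS of A} with $T$ idempotent''; and your reduction tacitly uses that $\rk(B)=r$ so that $\Ra(U^*BU)$ is exactly the first $r$ coordinates, which is guaranteed by the Hartwig--Spindelb\"ock form.
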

The next result provides the Moore-Penrose of a matrix $A$ as in \eqref{HS of A}. Before, recall that Hung and Markham \cite{HuMa} proved that the Moore-Penrose inverse of a partitioned matrix $M=\left[\begin{array}{cc}
P & Q\\
0 & 0
\end{array}\right]$ is of the form 
\begin{equation}\label{MP of M}
M^\dag=\left[\begin{array}{cc}
P^* R^\dag & 0 \\
Q^* R^\dag & 0
\end{array}\right], ~\text{where}~ R=PP^*+QQ^*.
\end{equation}

\begin{theorem} \label{thm MP of A} Let $A\in \Cnn$ be a matrix as in \eqref{HS of A}. The Moore-Penrose inverse of $A$ is given by
\begin{equation}\label{MP of A}
A^\dag=U\left[\begin{array}{cc}
K^*\Sigma^{-1}T & 0\\
L^*\Sigma^{-1}T & 0
\end{array}\right]U^*.
\end{equation}
\end{theorem}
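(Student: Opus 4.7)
The plan is to apply the Hung--Markham block formula \eqref{MP of M} to the representation of $A$ in \eqref{HS of A}. Writing $M := \Sigma^{-1}T$ for brevity, $A$ takes the block form $A = U \begin{bmatrix} M^\dag K & M^\dag L \\ 0 & 0 \end{bmatrix} U^*$, which matches the shape required by \eqref{MP of M} with $P = M^\dag K$ and $Q = M^\dag L$. Since $U$ is unitary, I only need to compute the Moore--Penrose inverse of the inner block and then sandwich the result between $U$ and $U^*$.

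First I compute the auxiliary matrix $R := PP^* + QQ^* = M^\dag (KK^* + LL^*)(M^\dag)^* = M^\dag (M^\dag)^*$, using the Hartwig--Spindelb\"ock identity $KK^* + LL^* = I_r$. Next I must evaluate $R^\dag$: the claim is that $R^\dag = M^*M$. This is the only non-routine step and is a special case of the well-known identity $(XX^*)^\dag = (X^\dag)^*X^\dag$ applied to $X = M^\dag$ (so $X^\dag = M$); it can be verified quickly from the SVD of $M^\dag$, or equivalently by checking the four Penrose equations for $M^*M$ as a candidate inverse of $M^\dag(M^\dag)^*$, where all four collapse immediately using $MM^\dag M = M$ and $M^\dag M M^\dag = M^\dag$.

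Once $R^\dag = M^*M$ is in hand, substitution into \eqref{MP of M} gives
$$A^\dag = U \begin{bmatrix} K^*(M^\dag)^* M^*M & 0 \\ L^*(M^\dag)^* M^*M & 0 \end{bmatrix} U^*,$$
and the final simplification uses $(M^\dag)^* M^* = (MM^\dag)^* = MM^\dag$ (because $MM^\dag$ is Hermitian) together with $MM^\dag M = M$, so that $(M^\dag)^* M^*M = M = \Sigma^{-1}T$. This delivers exactly \eqref{MP of A}.

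The main obstacle is the intermediate identity $R^\dag = M^*M$; everything else is bookkeeping with 2$\times$2 block multiplications, the constraint $KK^* + LL^* = I_r$, and the basic Penrose equations for $M$. A completely parallel alternative, if one prefers to avoid invoking \eqref{MP of M}, is to take the right-hand side of \eqref{MP of A} as a candidate $X$ and verify the four Penrose equations $AXA=A$, $XAX=X$, $(AX)^*=AX$, $(XA)^*=XA$ directly; each of them telescopes via $KK^*+LL^*=I_r$ and the Penrose equations for $M$, bypassing the $R^\dag$ computation altogether.
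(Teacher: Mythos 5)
Your proposal is correct and follows essentially the same route as the paper: both apply the Hung--Markham formula \eqref{MP of M} to the inner block of \eqref{HS of A}, reduce $R$ to $M^\dag(M^\dag)^*$ with $M=\Sigma^{-1}T$ via $KK^*+LL^*=I_r$, and then simplify $P^*R^\dag$ back to $K^*\Sigma^{-1}T$. The only cosmetic difference is that the paper invokes the identity $C^\dag=C^*(CC^*)^\dag$ with $C=M^\dag$ to get $(M^\dag)^*R^\dag=(M^\dag)^\dag=M$ in one step, whereas you compute $R^\dag=M^*M$ explicitly and telescope with the Penrose equations; these are the same calculation organized slightly differently.
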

\begin{proof}
Consider $A$ written as in \eqref{HS of A}. Clearly,
\begin{equation*} \label{core EP eq3 }
 A^\dag= U\left[\begin{array}{cc}
(\Sigma^{-1}T)^\dag K & (\Sigma^{-1}T)^\dag L\\
0&0
\end{array}\right]^\dag U^*.
\end{equation*}
By applying \eqref{MP of M}, we have
\begin{equation}\label{PQR}
A^\dag = U\left[\begin{array}{cc}
P^* R^\dag & 0 \\
Q^* R^\dag & 0
\end{array}\right]U^*,
\end{equation}
where $R=PP^*+QQ^*$, $P=(\Sigma^{-1}T)^\dag K$, and $Q=(\Sigma^{-1}T)^\dag L$.
Now, we calculate $R$ as follows
\begin{equation}\label{R}
\begin{split}
  R &=  
 \left[\begin{array}{cc}
P & Q
\end{array}\right] \left[\begin{array}{cc}
P & Q
\end{array}\right]^*  \\
& = 
\left[
\begin{array}{cc}
(\Sigma^{-1}T)^\dag K & (\Sigma^{-1}T)^\dag L
\end{array}
\right]
\left[
\begin{array}{c}
K^* ((\Sigma^{-1}T)^\dag)^* \\
L^* ((\Sigma^{-1}T)^\dag)^* 
\end{array} 
\right]  \\
& = 
(\Sigma^{-1}T)^\dag \Sigma  \left[
\begin{array}{cc}
K & L
\end{array}
\right]
\left[
\begin{array}{c}
K^* \\
L^*  
\end{array} 
\right] ((\Sigma^{-1}T)^\dag)^* \\
& = 
(\Sigma^{-1}T)^\dag ((\Sigma^{-1}T)^\dag)^*. 
\end{split}
\end{equation}
Now, by using the identity $C^\dag = C^* (CC^*)^\dag$ with $C=(\Sigma^{-1}T)^\dag$, from \eqref{PQR} and \eqref{R}  we obtain
\[
A^\dag = U\left[\begin{array}{cc}
K^* ((\Sigma^{-1}T)^\dag)^* ((\Sigma^{-1}T)^\dag ((\Sigma^{-1}T)^\dag)^*)^\dag & 0 \\
L^* ((\Sigma^{-1}T)^\dag)^* ((\Sigma^{-1}T)^\dag ((\Sigma^{-1}T)^\dag)^*)^\dag  & 0
\end{array}\right]U^*=U\left[\begin{array}{cc}
K^*((\Sigma^{-1}T)^\dag)^\dag & 0\\
L^*((\Sigma^{-1}T)^\dag)^\dag  & 0
\end{array}\right]U^*,
\]
whence follows \eqref{MP of A}.
\end{proof}

Greville \cite{BeGr} came up with a necessary and sufficient condition for the reverse order law $(BA)^\dag=B^\dag A^\dag$ in the 1960s. Multiple studies have been done on this problem over the years. Next, we provide a characterization of the reverse order law when $A$ is below $B$ under the diamond partial order.

\begin{theorem} \label{ROL} Let $A,B\in \Cnn$ be matrices such that $A\diam B$. Suppose that $A$ and $B$ satisfy the conditions given in Theorem \ref{HS diamond order}. Then $(AB)^\dag=B^\dag A^\dag$ if and only if $((\Sigma^{-1}T)^\dag K\Sigma)^\dag=\Sigma^{-1}K^*\Sigma^{-1}T$.
\end{theorem}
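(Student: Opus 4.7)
The plan is to exploit the explicit representations of $A$, $B$, $A^\dag$, $B^\dag$ provided by \eqref{HS of A}, \eqref{HS of B}, \eqref{MP of B}, and Theorem \ref{thm MP of A}, then to compute both $(AB)^\dag$ and $B^\dag A^\dag$ in closed form with the same unitary $U$, and compare the resulting blocks. It will be convenient to introduce the shorthand $M := (\Sigma^{-1}T)^\dag K\Sigma$, so that the target condition becomes simply $M^\dag = \Sigma^{-1}K^*\Sigma^{-1}T$.

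First I would block-multiply to obtain
\begin{equation*}
AB \;=\; U\begin{bmatrix} M K & M L \\ 0 & 0 \end{bmatrix} U^*,
\end{equation*}
whose upper row factors as $M\begin{bmatrix} K & L\end{bmatrix}$. Applying the Hung--Markham formula \eqref{MP of M} with $P=MK$ and $Q=ML$, the matrix $R = PP^* + QQ^*$ simplifies to $M(KK^*+LL^*)M^* = MM^*$ thanks to $KK^*+LL^* = I_r$. Invoking the identity $C^\dag = C^*(CC^*)^\dag$ (already used in the proof of Theorem \ref{thm MP of A}) with $C=M$ then yields the clean expression
\begin{equation*}
(AB)^\dag \;=\; U\begin{bmatrix} K^* M^\dag & 0 \\ L^* M^\dag & 0 \end{bmatrix} U^*.
\end{equation*}

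Next I would multiply out $B^\dag A^\dag$ directly from \eqref{MP of B} and \eqref{MP of A}; the product has vanishing second block-column and its first block-column factors as $\begin{bmatrix} K^* \\ L^* \end{bmatrix}\Sigma^{-1}K^*\Sigma^{-1}T$. Equating the two closed forms, $(AB)^\dag = B^\dag A^\dag$ is equivalent to
\begin{equation*}
\begin{bmatrix} K^* \\ L^* \end{bmatrix}\bigl(M^\dag - \Sigma^{-1}K^*\Sigma^{-1}T\bigr) \;=\; 0.
\end{equation*}
Premultiplying by $\begin{bmatrix} K & L\end{bmatrix}$ and using once more $KK^*+LL^* = I_r$, the left factor collapses to $I_r$ and the equation reduces to $M^\dag = \Sigma^{-1}K^*\Sigma^{-1}T$, which is precisely the condition in the statement. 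The converse direction is immediate by substitution back into the block equation.

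I expect the main obstacle to be the clean derivation of $(AB)^\dag$: if one feeds the entries of $AB$ directly into \eqref{MP of M} without first extracting the common left factor $M$, the matrix $R$ appears as a cumbersome four-term sum and the common right factor $M^\dag$ remains hidden. The observation that the nonzero block of $AB$ has the structure $M\begin{bmatrix} K & L\end{bmatrix}$---so that $R$ collapses to $MM^*$ via $KK^*+LL^*=I_r$ and then $M^*(MM^*)^\dag = M^\dag$ ---is the key simplification that makes the cancellation on the left by $\begin{bmatrix} K & L\end{bmatrix}$ possible and reveals the stated condition.
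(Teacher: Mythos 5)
Your proposal is correct and follows essentially the same route as the paper: compute $AB$ in block form, apply the Hung--Markham formula together with $KK^*+LL^*=I_r$ and $C^\dagger=C^*(CC^*)^\dagger$ to get $(AB)^\dagger$, compute $B^\dagger A^\dagger$ from the explicit formulas, and cancel the common left factor $\begin{bmatrix}K^* \\ L^*\end{bmatrix}$ by premultiplying with $\begin{bmatrix}K & L\end{bmatrix}$. The only cosmetic difference is that you package the two resulting block identities as a single factored equation, whereas the paper writes them out separately before adding.
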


\begin{proof}
Let $A$ and $B$ are of the form \eqref{HS of A} and \eqref{HS of B}, respectively, a straightforward computation yields
\begin{equation*}\label{HS of AB}
AB=U\left[\begin{array}{cc}
(\Sigma^{-1}T)^\dag K\Sigma K & (\Sigma^{-1}T)^\dag K\Sigma L\\
0 &0
\end{array}\right]U^*.
\end{equation*} 
As in the proof of Theorem \ref{thm MP of A}, it is easy to obtain 
\begin{equation}\label{MP of AB}
(AB)^\dag=U\left[\begin{array}{cc}
K^*((\Sigma^{-1}T)^\dag K\Sigma)^\dag & 0\\
L^* ((\Sigma^{-1}T)^\dag K\Sigma )^\dag& 0
\end{array}\right]U^*.
\end{equation}
On the other hand, from \eqref{HS of A} and \eqref{HS of B}, we have
\begin{equation} \label{B+A+}
B^\dag A^\dag =U\left[\begin{array}{cc}
(K^*\Sigma^{-1})^2 T &0\\
L^*\Sigma^{-1}K^*\Sigma^{-1}T &0
\end{array}\right]U^*.
\end{equation}
Now, from \eqref{MP of AB} and \eqref{B+A+} it follows that $(AB)^\dag=B^\dag A^\dag$ holds if and only if
\begin{equation}\label{Eq 1}
K^*((\Sigma^{-1}T)^\dag K\Sigma)^\dag = K^*\Sigma^{-1}K^*\Sigma^{-1}T,
\end{equation}
\begin{equation}\label{Eq 2}
L^* ((\Sigma^{-1}T)^\dag K\Sigma )^\dag=L^*\Sigma^{-1}K^*\Sigma^{-1}T.
\end{equation}
Finally, note that the above conditions are equivalent to  $((\Sigma^{-1}T)^\dag K\Sigma)^\dag =\Sigma^{-1}K^*\Sigma^{-1}T$. In fact,pre-multiplying \eqref{Eq 1} and \eqref{Eq 2} by $K$ and $L$, respectively, and then adding, we deduce $((\Sigma^{-1}T)^\dag K\Sigma)^\dag =\Sigma^{-1}K^*\Sigma^{-1}T$. Reciprocally, if $((\Sigma^{-1}T)^\dag K\Sigma)^\dag =\Sigma^{-1}K^*\Sigma^{-1}T$ it is clear that \eqref{Eq 1} and \eqref{Eq 2} are fulfilled. 
\end{proof}

An interesting consequence of the previous theorem arises when $A=B$. In this case, we obtain a new characterization of bi-dagger matrices using the Hartwig-Spindelb\"ock decomposition. Recall that $B$ is a bi-dagger matrix if $(B^2)^\dag=(B^\dag)^2$.

\begin{corollary} Let $B\in\Cnn$ be as in \eqref{HS of B}. Then $B$ is bi-dagger if and only if $(\Sigma K\Sigma)^\dag=\Sigma^{-1}K^*\Sigma^{-1}$.
\end{corollary}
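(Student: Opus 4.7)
The plan is to obtain this corollary as the specialization of Theorem \ref{ROL} to the case $A=B$. Reflexivity of the diamond partial order is immediate ($A\spa A$ trivially and $AA^*A=AA^*A$), so the hypotheses of Theorem \ref{ROL} are satisfied whenever we identify $A$ with $B$. Under this identification, $(AB)^\dag = (B^2)^\dag$ and $B^\dag A^\dag = (B^\dag)^2$, so the conclusion $(AB)^\dag = B^\dag A^\dag$ of Theorem \ref{ROL} is precisely the bi-dagger condition $(B^2)^\dag=(B^\dag)^2$.

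The only nontrivial point is to determine which idempotent $T\in \Crr$ Theorem \ref{HS diamond order} associates with the pair $(A,B)=(B,B)$. First I would substitute $T=I_r$ into the parametrization \eqref{HS of A}: since $(\Sigma^{-1}I_r)^\dag = (\Sigma^{-1})^\dag = \Sigma$, formula \eqref{HS of A} reduces to $U\bigl[\begin{smallmatrix} \Sigma K & \Sigma L \\ 0 & 0\end{smallmatrix}\bigr]U^*$, which is exactly $B$ as in \eqref{HS of B}. By the uniqueness clause of Theorem \ref{HS diamond order}, $T=I_r$ is therefore \emph{the} idempotent matching $A=B$.

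With this identification in hand, the characterization supplied by Theorem \ref{ROL} becomes
\begin{equation*}
((\Sigma^{-1}I_r)^\dag K\Sigma)^\dag = \Sigma^{-1}K^*\Sigma^{-1}I_r,
\end{equation*}
which simplifies immediately to $(\Sigma K\Sigma)^\dag=\Sigma^{-1}K^*\Sigma^{-1}$, as claimed.

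I do not expect any genuine obstacle here; the work is bookkeeping. The one place to be careful is the uniqueness step: one must invoke the uniqueness part of Theorem \ref{HS diamond order} to exclude other candidate idempotents $T$ that might give different looking but equal matrices, rather than simply asserting $T=I_r$ by inspection. Once uniqueness is cited, the substitution is mechanical and the equivalence is transparent.
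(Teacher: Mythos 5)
Your proposal is correct and follows exactly the same route as the paper: specialize Theorem \ref{ROL} to $A=B$ and observe that the idempotent $T$ must be $I_r$. Your explicit appeal to the uniqueness clause of Theorem \ref{HS diamond order} to justify $T=I_r$ is a welcome bit of care that the paper's one-line proof omits, but it is the same argument.
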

\begin{proof}
It suffices to note that when $A=B$ in Theorem \ref{ROL}, the matrix $T$ reduces to the identity matrix.   
\end{proof}

Drazin \cite{Dr2} observed that the Moore-Penrose inverse is isotonic with respect to the star partial order, that is, $A\st B \Leftrightarrow A^\dag\st B^\dag$. However, a similar property for the diamond partial order is not generally true. In fact, by using \eqref{diamond characterization} one can see that Example \ref{example 1} and Example \ref{example 2} show that neither of the implications $A\diam B \Rightarrow A^\dag \diam B^\dag$ and $A^\dag \diam B^\dag \Rightarrow A\diam B$ is valid in general.

Next, we present some conditions under which $A\diam B$ implies $A^\dag \diam B^\dag$. 

\begin{theorem} Let $A,B\in \Cnn$ be matrices such that $A\diam B$. Suppose that $A$ and $B$ satisfy the conditions given in Theorem \ref{HS diamond order}. Then $A^\dag \diam B^\dag$ if and only if $T(T^*-I_r)\Sigma^{-2}T=0$.
\end{theorem}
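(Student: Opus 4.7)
The plan is to apply Definition \ref{def BaHa} directly to the pair $(A^\dag,B^\dag)$: the equivalence $A^\dag \diam B^\dag \Leftrightarrow A^\dag \spa B^\dag \text{ and } A^\dag(B^\dag)^*A^\dag=A^\dag(A^\dag)^*A^\dag$ splits the problem in two. I expect the space pre-order condition to hold for free from the shape of $A$ and $B$, so all of the work will be in reducing the triple-product identity to $T(T^*-I_r)\Sigma^{-2}T=0$ by direct block calculation.

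First I would write $A^\dag$ as in \eqref{MP of A} and $B^\dag$ as in \eqref{MP of B}, so that both share the outer factor $U\begin{bmatrix}K^*\\ L^*\end{bmatrix}\Sigma^{-1}(\cdot)\begin{bmatrix}I_r & 0\end{bmatrix}U^*$, with $(\cdot)$ equal to $T$ for $A^\dag$ and $I_r$ for $B^\dag$. The inclusion $\Ra(A^\dag)\subseteq\Ra(B^\dag)$ is then immediate by factoring $T$ out of the first block column of $A^\dag$. For the other inclusion, since $KK^*+LL^*=I_r$ the block row $\Sigma^{-1}[K\ L]$ has full row rank $r$, so $\Ra((B^\dag)^*)=U\cdot(\C^r\oplus\{0\})$, which obviously contains $\Ra((A^\dag)^*)$. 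Hence $A^\dag\spa B^\dag$ holds unconditionally under our hypotheses.

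Next comes the main computation. Denote by $M_{A^\dag}$ and $M_{B^\dag}$ the inner block matrices, so that the triple-product condition amounts to $M_{A^\dag}M_{B^\dag}^*M_{A^\dag}=M_{A^\dag}M_{A^\dag}^*M_{A^\dag}$. Expanding each product and using $KK^*+LL^*=I_r$ to collapse the inner sum collapses both sides to the form
\[
\begin{bmatrix}K^*\\ L^*\end{bmatrix}\Sigma^{-1} X \Sigma^{-2} T \begin{bmatrix}I_r & 0\end{bmatrix},
\]
with $X=TT^*$ on the left-hand side and $X=T$ on the right. Thus the equation becomes
\[
\begin{bmatrix}K^*\\ L^*\end{bmatrix}\Sigma^{-1}(T-TT^*)\Sigma^{-2}T\begin{bmatrix}I_r & 0\end{bmatrix}=0.
\]

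Finally, I would strip the outer factors. Because $KK^*+LL^*=I_r$, the matrix $\begin{bmatrix}K^*\\ L^*\end{bmatrix}$ has $[K\ L]$ as a left inverse; the factor $[I_r\ 0]$ has an obvious right inverse; and $\Sigma^{-1}$ is nonsingular. Cancelling these gives $(T-TT^*)\Sigma^{-2}T=0$, which is exactly $T(T^*-I_r)\Sigma^{-2}T=0$. The whole argument is a clean direct calculation; the only real obstacle is keeping the block bookkeeping transparent so that the identity $KK^*+LL^*=I_r$ is visibly the only fact driving the simplification.
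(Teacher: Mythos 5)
Your proposal is correct and follows essentially the same route as the paper: apply the definition of $\diam$ to the pair $(A^\dag,B^\dag)$, note that $A^\dag\spa B^\dag$ is automatic, expand the triple products in blocks using $KK^*+LL^*=I_r$, and cancel the outer factors (via the left inverse $[K\;\, L]$ of $\bigl[\begin{smallmatrix}K^*\\ L^*\end{smallmatrix}\bigr]$ and the nonsingularity of $\Sigma$) to land on $T(T^*-I_r)\Sigma^{-2}T=0$. The only harmless quibbles are that your labels $X=TT^*$ and $X=T$ are attached to the wrong sides of the equation (the middle factor $B^\dag$ produces the $T$, not the $TT^*$), and that the paper obtains $A^\dag\spa B^\dag$ from $A\diam B\Leftrightarrow A^\dag\minus B^\dag$ rather than by direct inspection of the block forms.
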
 
\begin{proof}
By definition of diamond partial order, $A^\dag \diam B^\dag$ if $A^\dag \spa B^\dag$ and $A^\dag (A^\dag)^*A^\dag=A^\dag (B^\dag)^* A^\dag$. As $A\diam B$, it is clear that $A^\dag \spa B^\dag$ is always satisfied. So, it remains to prove the second condition. 
In fact, from \eqref{MP of A} and the fact that $KK^*+LL^*=I_r$, we have 
\begin{equation}\label{Eq 3}
\begin{split}
A^\dag [(A^\dag)^*A^\dag] &= 
U\left[\begin{array}{cc}
K^*\Sigma^{-1}T & 0\\
L^*\Sigma^{-1}T & 0
\end{array}\right]\left[\begin{array}{cc}
T^*\Sigma^{-1}(KK^*+LL^*)\Sigma^{-1}T &0\\
0 & 0
\end{array}\right]U^*\\
&= U\left[\begin{array}{cc}
K^*\Sigma^{-1}T & 0\\
L^*\Sigma^{-1}T & 0
\end{array}\right]\left[\begin{array}{cc}
T^*\Sigma^{-2}T &0\\
0 & 0
\end{array}\right]U^*\\
&= U\left[\begin{array}{cc}
K^*\Sigma^{-1}TT^*\Sigma^{-2}T &0\\
L^*\Sigma^{-1}TT^*\Sigma^{-2}T  & 0
\end{array}\right]U^*.
\end{split}
\end{equation}
Similarly, from \eqref{MP of B}, direct calculus yields
\begin{equation}\label{Eq 4}
A^\dag(B^\dag)^*A^\dag=U\left[\begin{array}{cc}
K^*\Sigma^{-1}T\Sigma^{-2}T  & 0\\
L^*\Sigma^{-1}T\Sigma^{-2}T  & 0
\end{array}\right]U^*.
\end{equation}
In consequence, from \eqref{Eq 3} and \eqref{Eq 4} we deduce that $A^\dag (A^\dag)^*A^\dag=A^\dag (B^\dag)^* A^\dag$ is equivalent to
\begin{equation}\label{Eq 5}
K^*\Sigma^{-1}TT^*\Sigma^{-2}T=K^*\Sigma^{-1}T\Sigma^{-2}T,
\end{equation}
\begin{equation}\label{Eq 6}
L^*\Sigma^{-1}TT^*\Sigma^{-2}T=L^*\Sigma^{-1}T\Sigma^{-2}T.
\end{equation}
Finally, note that the above two conditions are equivalent to $T(T^*-I_r)\Sigma^{-2}T=0$. In fact, by pre-multiplying \eqref{Eq 5} and \eqref{Eq 6} by $K$ and $L$, respectively, and then adding, we obtain $\Sigma^{-1}TT^*\Sigma^{-2}T=\Sigma^{-1}T\Sigma^{-2}T$, which in turn is equivalent to $T(T^*-I_r)\Sigma^{-2}T=0$ because $\Sigma$ is nonsingular. Clearly, if this last equality is true, then \eqref{Eq 5} and \eqref{Eq 6} are fulfilled.
\end{proof}

\section*{Funding}

This paper was partially supported by the Universidad Nacional de R\'io Cuarto, Argentina (grant PPI 18/C559), the Universidad Nacional de La Pampa, Facultad de Ingenier\'ia  (Grant Resol. Nro. 135/19), and CONICET (grant PIBAA 28720210100658CO). 

\section*{Acknowledgements}

One of the authors, Gayatri Maharana, would like to express her sincere gratitude to Prof. Jajati Keshari Sahoo for his unwavering support, guidance, and mentorship throughout the course of this research.

\end{document}